\newtheorem{theorem}{Theorem}[section]
\newtheorem{corollary}[theorem]{Corollary}
\newtheorem{lemma}[theorem]{Lemma}
\theoremstyle{definition}
\newtheorem{definition}[theorem]{Definition}
\theoremstyle{remark}
\begin{document}
\title[Uniformly Convex spiral function properties for Pascal distribution
series]{uniformly convex spiral functions and uniformly spirallike function
associated with Pascal distribution series}
\author{G. Murugusundaramoorthy }
\address{School of Advanced Sciences, Vellore Institute of Technology,
deemed to be university Vellore - 632014, Tamilnadu, India}
\email{gmsmoorthy@yahoo.com}
\author{B.A. Frasin}
\address{Faculty of Science, Department of Mathematics, Al al-Bayt
University, Mafraq, Jordan}
\email{bafrasin@yahoo.com}
\author{Tariq Al-Hawary}
\address{Department of Applied Science, Ajloun College, Al-Balqa Applied
University, Ajloun 26816. Jordan. }
\email{tariq\_amh@bau.edu.jo}

\begin{abstract}
The aim of this paper is to find the necessary and sufficient conditions and
inclusion relations for Pascal distribution series to be in the classes $%
\mathcal{SP}_{p}(\alpha ,\beta )$ and $\mathcal{UCV}_{p}(\alpha ,\beta )$ of
uniformly spirallike functions. Further, we consider  properties of a
special function related to Pascal distribution series. Several corollaries
and consequences of the main results are also considered.

\textbf{Mathematics Subject Classification} (2010): 30C45.

\textbf{Keywords}: Analytic functions, Hadamard product, uniformly
spirallike functions, Pascal distribution series.
\end{abstract}

\maketitle

\section{\protect\bigskip Introduction and definitions}

Let $\mathcal{A}$ denote the class of functions of the form%
\begin{equation}
f(z)=z+\sum_{n=2}^{\infty }a_{n}z^{n},  \label{tt}
\end{equation}%
which are analytic in the open unit disk $\mathbb{U}=\{z\in \mathbb{C}%
:\left\vert z\right\vert <1\}$ and normalized by the conditions $%
f(0)=0=f^{\prime }(0)-1.$Further, let $\mathcal{T}$ \ be a subclass of $%
\mathcal{A}$ consisting of functions of the form, 
\begin{equation}
f(z)=z-\sum\limits_{n=2}^{\infty }\left\vert a_{n}\right\vert z^{n},\qquad
z\in \mathbb{U}\text{.}  \label{m1}
\end{equation}%
A function $f\in \mathcal{A}$ is spirallike if 
\begin{equation*}
\mathfrak{R}\left( e^{-i\alpha }\frac{zf^{\prime }(z)}{f(z)}\right) >0,\ 
\end{equation*}%
for some $\alpha $ with $\left\vert \alpha \right\vert <\pi /2\ $and for all 
$z\in \mathbb{U}$ . Also $f(z)$ is convex spirallike if $zf^{\prime }(z)$ is
spirallike.

In \cite{sel}, Selvaraj and Geetha introduced the following subclasses of
unifromly spirallike and convex spirallike functions.

\begin{definition}
A function $f$ of the form (\ref{tt}) is said to be in the class $\mathcal{SP%
}_{p}(\alpha ,\beta )$ if it satisfies the following condition:%
\begin{equation*}
\mathfrak{R}\left\{ e^{-i\alpha }\left( \frac{zf^{\prime }(z)}{f(z)}\right)
\right\} >\left\vert \frac{zf^{\prime }(z)}{f^{\prime }(z)}-1\right\vert
+\beta ~\ \ \ \ \ (\left\vert \alpha \right\vert <\pi /2~;0\leq \beta <1)
\end{equation*}
\end{definition}

and $f\in $\ $\mathcal{UCV}_{p}(\alpha ,\beta )$\ if and only if $zf^{\prime
}(z)\in \mathcal{SP}_{p}(\alpha ,\beta ).$

We write%
\begin{equation*}
\mathcal{TSP}_{p}(\alpha ,\beta )=\mathcal{SP}_{p}(\alpha ,\beta )\cap 
\mathcal{T}
\end{equation*}%
and 
\begin{equation*}
\mathcal{UCT}_{p}(\alpha ,\beta )=\mathcal{UCV}_{p}(\alpha ,\beta )\cap 
\mathcal{T}\text{.}
\end{equation*}%
\bigskip

In particular, we note that $\mathcal{SP}_{p}(\alpha ,0)=\mathcal{SP}%
_{p}(\alpha )$\ and $\mathcal{UCV}_{p}(\alpha ,0)=\mathcal{UCV}_{p}(\alpha
), $ the classes of uniformly spirallike and uniformly convex spirallike
were introduced by Ravichandran et al. \cite{rav}. \bigskip For $\alpha =0,$
the classes $\mathcal{UCV}_{p}(\alpha )$ and $\mathcal{SP}_{p}(\alpha )$
respectively, reduces to the classes $\mathcal{UCV}$ and $\mathcal{SP}$
introduced and studied by Ronning \cite{ron2}.For more interesting
developments of some related subclasses of uniformly spirallike and
uniformly convex spirallike, the readers may be referred to the works of
Frasin \cite{fr,fr3}, Goodman \cite{goo1,goo2}, Tariq Al-Hawary and Frasin 
\cite{fr2}, Kanas and Wisniowska \cite{kan1, kan2} and Ronning \cite{ron1,
ron2}.

A function $f\in \mathcal{A}$ is said to be in the class $\mathcal{R}^{\tau
}(A,B)$,$\tau \in \mathbb{C}\backslash \{0\}$, $-1\leq B<A\leq 1,$ if it
satisfies the inequality%
\begin{equation*}
\left\vert \frac{f^{\prime }(z)-1}{(A-B)\tau -B[f^{\prime }(z)-1]}%
\right\vert <1,\ \ \ \;z\in \mathbb{U}.
\end{equation*}

\bigskip This class was introduced by Dixit and Pal \cite{dix}.

A variable $x$ is said to be \emph{Pascal distribution} if it takes the
values $0,1,2,3,\dots $ with probabilities $(1-q)^{m}$, $\dfrac{qm(1-q)^{m}}{%
1!}$, $\dfrac{q^{2}m(m+1)(1-q)^{m}}{2!}$, $\dfrac{q^{3}m(m+1)(m+2)(1-q)^{m}}{%
3!}$, \dots respectively, where $q$ and $m$ are called the parameters, and
thus 
\begin{equation*}
P(x=k)=\binom{k+m-1}{m-1}\,q^{k}(1-q)^{m},k=0,1,2,3,\dots .
\end{equation*}

Very recently, El-Deeb \cite{pascal} introduced a power series whose
coefficients are probabilities of Pascal distribution 
\begin{equation}  \label{PHI}
\Psi_q^m(z)=z+\sum\limits_{n=2}^{\infty} \binom{n+m-2}{m-1}\cdot
q^{n-1}(1-q)^mz^n,\qquad z\in \mathbb{U}
\end{equation}
where $m \geq 1;0\leq q\leq 1$ and we note that, by ratio test the radius of
convergence of above series is infinity. We also define the series 
\begin{equation}  \label{PSI}
\Phi_q^m(z) =2z-\Psi_q^m(z)=z-\sum\limits_{n=2}^{\infty} \binom{n+m-2}{m-1}%
\cdot q^{n-1}(1-q)^mz^n, \qquad z\in \mathbb{U}.
\end{equation}
Now, we considered the linear operator 
\begin{equation*}
\mathcal{I}_q^m(z):\mathcal{A}\rightarrow\mathcal{A}
\end{equation*}
defined by the convolution or Hadamard product 
\begin{equation}  \label{I}
\mathcal{\ I}_q^mf(z)=\Psi_q^m(z)\ast f(z) =z+\sum\limits_{n=2}^{\infty} 
\binom{n+m-2}{m-1}\cdot q^{n-1}(1-q)^m a_nz^n,\qquad z\in \mathbb{U}
\end{equation}

Motivated by several earlier results on connections between various
subclasses of analytic and univalent functions by using hypergeometric
functions (see for example,\cite{1,fr4,2,8,9}) and by the recent
investigations (see for example, \cite{ou, fra, por1, por3, por2, mur1,mur2}%
), in the present paper we determine the necessary and sufficient conditions
for $\Phi _{q}^{m}(z)$ to be in our classes $\mathcal{TSP}_{p}(\alpha ,\beta
)$ and $\mathcal{UCT}_{p}(\alpha ,\beta )$ and connections of these
subclasses with $\mathcal{R}^{\tau }(A,B)$. Finally, we give conditions for
the function $\mathcal{G}_{q}^{m}f(z)=\int_{0}^{z}\frac{\Phi _{q}^{m}(t)}{t}%
dt$ \ belonging to the above classes.

To establish our main results, we need the following Lemmas.

\begin{lemma}
\label{lem1}\cite{sel} A function\ $f$\ of the form (\ref{m1}) is in $%
\mathcal{TSP}_{p}(\alpha ,\beta )$ if and only if it satisfies%
\begin{equation}
\sum\limits_{n=2}^{\infty }(2n-\cos \alpha -\beta )\left\vert
a_{n}\right\vert \leq \cos \alpha -\beta ~~\ \ \ \ \ (\left\vert \alpha
\right\vert <\pi /2~;0\leq \beta <1).  \label{t1}
\end{equation}
\end{lemma}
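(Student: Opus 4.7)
The plan is to establish both directions of this coefficient characterization by a Silverman-Goodman-style argument, exploiting the sign structure of $f(z)=z-\sum_{n\ge 2}|a_n|z^n$.

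For sufficiency (coefficient inequality $\Rightarrow$ class membership), I would rewrite the defining condition as
\[
\mathfrak{R}\!\left(e^{-i\alpha}\frac{zf'(z)}{f(z)}\right)-\beta \;>\; \left|\frac{zf'(z)}{f(z)}-1\right|,
\]
and, after multiplying through by $|f(z)|$, reduce the problem to the estimate
\[
\bigl|zf'(z)-f(z)\bigr|+\beta\,|f(z)| \;\le\; \mathfrak{R}\!\left(e^{-i\alpha}\,zf'(z)\,\overline{f(z)}/|f(z)|\right).
\]
The natural route is then to bound the left-hand side above by $\sum_{n\ge 2}(n-1)|a_n|r^n$ and $r+\sum|a_n|r^n$ for $|z|=r<1$ via the triangle inequality, and to bound the right-hand side below using $\mathfrak{R}(e^{-i\alpha}w)\ge \cos\alpha\cdot\mathfrak{R}(w)-|\sin\alpha|\cdot|\mathfrak{I}(w)|$ applied to the explicit power series expansions. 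Collecting terms of the same degree and letting $r\to 1^-$ should produce an inequality whose dominant coefficient in $|a_n|$ is $(2n-\cos\alpha-\beta)$, and the hypothesis then delivers the strict inequality.

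For necessity, the key observation is that the defining inequality must hold in particular along the positive real axis $z=r\in(0,1)$, where $f(r)$ and $rf'(r)$ are real (and $f(r)>0$ for $r$ sufficiently small since the coefficients of $f$ alternate in a controlled way). There $\frac{zf'(z)}{f(z)}$ is real-valued, so $\mathfrak{R}\!\left(e^{-i\alpha}\frac{rf'(r)}{f(r)}\right)=\cos\alpha\cdot\frac{rf'(r)}{f(r)}$ and $\left|\frac{rf'(r)}{f(r)}-1\right|=1-\frac{rf'(r)}{f(r)}$. Substituting the series expansions, the defining inequality becomes a single scalar inequality in $r$ involving the $|a_n|$; clearing the positive denominator $f(r)/r$ and passing to the limit $r\to 1^-$ yields exactly the claimed bound $\sum(2n-\cos\alpha-\beta)|a_n|\le \cos\alpha-\beta$.

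The main obstacle will be the sufficiency direction: the spirallike rotation $e^{-i\alpha}$ prevents a clean reduction to the Silverman case $\alpha=0$. A naive triangle-inequality bound on $\mathfrak{R}(e^{-i\alpha}zf'(z)\overline{f(z)})$ typically produces a coefficient of the form $n(1+\cos\alpha)-1-\beta$, which is strictly weaker than $2n-\cos\alpha-\beta$; the correct bound requires splitting the real and imaginary parts of $e^{-i\alpha}\,zf'(z)\overline{f(z)}$ with the right weights and using the structure of $f\in\mathcal T$ (in particular that all nontrivial coefficients carry the same sign) to avoid losses in the triangle inequality. Matching the precise coefficient $(2n-\cos\alpha-\beta)$ in the resulting estimate is the delicate part of the argument.
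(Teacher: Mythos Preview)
The paper does not prove this lemma at all: it is quoted from Selvaraj and Geetha \cite{sel} as a preliminary tool, with no argument supplied. There is therefore nothing in the paper to compare your proposal against.

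Regarding the proposal itself, you have the difficulty located in the wrong direction. The standard sufficiency argument (bound $|w-1|$ with $w=zf'(z)/f(z)$ and use $\Re(e^{-i\alpha}w)\ge\cos\alpha-|w-1|$) shows that the condition $\sum_{n\ge 2}(2n-2+\cos\alpha-\beta)|a_n|\le\cos\alpha-\beta$ already forces $f\in\mathcal{SP}_p(\alpha,\beta)$; since $2n-\cos\alpha-\beta\ge 2n-2+\cos\alpha-\beta$ (the difference is $2(1-\cos\alpha)\ge 0$), the lemma's hypothesis is stronger and sufficiency follows at once. The genuine obstacle is \emph{necessity}. Your real-axis plan, carried out exactly as you describe, gives on $z=r\in(0,1)$ a real quotient $w=rf'(r)/f(r)\le 1$, hence $\Re(e^{-i\alpha}w)=w\cos\alpha$ and $|w-1|=1-w$; clearing denominators and letting $r\to 1^{-}$ yields
\[
\sum_{n\ge 2}\bigl[n(1+\cos\alpha)-1-\beta\bigr]\,|a_n|\;\le\;\cos\alpha-\beta,
\]
not the claimed $\sum_{n\ge 2}(2n-\cos\alpha-\beta)|a_n|\le\cos\alpha-\beta$. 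For $\alpha\ne 0$ the former is strictly weaker, so restriction to the positive real axis cannot deliver the coefficient $2n-\cos\alpha-\beta$. Your plan for necessity therefore does not close as written; one needs either a different extremal direction in $\mathbb{U}$ or a different argument, and the place to look is the original source \cite{sel}.
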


\textit{In particular, when }$\beta =0,$ \textit{we obtain a necessary and
sufficient condition for a function }$f$\textit{\ of the form (\ref{m1}) to
be in the class }$\mathcal{TSP}_{p}(\alpha )$\textit{\ is that }%
\begin{equation}
\sum\limits_{n=2}^{\infty }(2n-\cos \alpha )\left\vert a_{n}\right\vert \leq
\cos \alpha ~~\ \ \ \ \ (\left\vert \alpha \right\vert <\pi /2).  \label{t2}
\end{equation}

\begin{lemma}
\label{lem2}\cite{sel} A function\ $f$\ of the form (\ref{m1}) is in $%
\mathcal{UCT}_{p}(\alpha ,\beta )$ if and only if it satisfies%
\begin{equation}
\sum\limits_{n=2}^{\infty }n(2n-\cos \alpha -\beta )\left\vert
a_{n}\right\vert \leq \cos \alpha -\beta ~~\ \ \ \ \ (\left\vert \alpha
\right\vert <\pi /2~;0\leq \beta <1).  \label{b1}
\end{equation}
\end{lemma}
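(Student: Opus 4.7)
The plan is to reduce the claim directly to Lemma~\ref{lem1} by invoking the defining Alexander-type relation between $\mathcal{UCV}_{p}(\alpha,\beta)$ and $\mathcal{SP}_{p}(\alpha,\beta)$: namely, $f \in \mathcal{UCV}_{p}(\alpha,\beta)$ if and only if $zf'(z) \in \mathcal{SP}_{p}(\alpha,\beta)$, as stated in the definition. Intersecting both sides with $\mathcal{T}$, one sees that $f \in \mathcal{UCT}_{p}(\alpha,\beta)$ is characterized by $zf'(z) \in \mathcal{TSP}_{p}(\alpha,\beta)$, since the differentiation map $f \mapsto zf'$ sends the negative-coefficient family (\ref{m1}) into itself.

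Concretely, I would write $f(z) = z - \sum_{n=2}^{\infty}|a_{n}|z^{n}$ and compute
\[
zf'(z) = z - \sum_{n=2}^{\infty} n|a_{n}|z^{n},
\]
which has the same form (\ref{m1}) but with coefficients $n|a_{n}|$ in place of $|a_{n}|$. Applying Lemma~\ref{lem1} to $zf'(z)$ then asserts that $zf'(z) \in \mathcal{TSP}_{p}(\alpha,\beta)$ if and only if
\[
\sum_{n=2}^{\infty}(2n - \cos\alpha - \beta)\, n|a_{n}| \leq \cos\alpha - \beta,
\]
which is precisely inequality (\ref{b1}). Both implications (necessity and sufficiency) follow simultaneously from this substitution, delivering the required criterion.

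The only potential obstacle is conceptual rather than computational: one must be comfortable invoking the Alexander-type equivalence $f \in \mathcal{UCV}_{p}(\alpha,\beta) \Leftrightarrow zf'(z) \in \mathcal{SP}_{p}(\alpha,\beta)$ within the restricted class $\mathcal{T}$, and verify that differentiation preserves the negative-coefficient structure. Both points are immediate from the definitions of $\mathcal{T}$ and $\mathcal{UCT}_{p}(\alpha,\beta)$, so once they are granted the proof collapses to a one-line substitution into Lemma~\ref{lem1}, with no further analytic work required.
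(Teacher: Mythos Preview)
Your argument is correct. Note, however, that the paper does not supply its own proof of this lemma; it is quoted from \cite{sel} as a known characterization, so there is nothing in the present paper to compare against. Your reduction via the Alexander-type relation $f \in \mathcal{UCV}_p(\alpha,\beta) \Leftrightarrow zf'(z) \in \mathcal{SP}_p(\alpha,\beta)$---which the paper states explicitly right after Definition~1.1---together with the observation that $f \mapsto zf'$ sends the form (\ref{m1}) to itself with $|a_n|$ replaced by $n|a_n|$, is the standard route to such coefficient criteria, and the substitution into Lemma~\ref{lem1} delivers (\ref{b1}) in both directions exactly as you describe.
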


\textit{In particular, when }$\beta =0,$ \textit{we obtain a necessary and
sufficient condition for a function }$f$\textit{\ of the form (\ref{m1}) to
be in the class }$\mathcal{UCT}_{p}(\alpha )$\ is that%
\begin{equation}
\sum\limits_{n=2}^{\infty }n(2n-\cos \alpha )\left\vert a_{n}\right\vert
\leq \cos \alpha ~~\ \ \ \ \ (\left\vert \alpha \right\vert <\pi /2).
\label{b3}
\end{equation}

\begin{lemma}
\label{lem3}\cite{dix} If $f$\textit{\ }$\in $\textit{\ }$\mathcal{R}^{\tau
}(A,B)$ is of the form \textit{(\ref{tt})} , then%
\begin{equation*}
\left\vert a_{n}\right\vert \leq (A-B)\frac{\left\vert \tau \right\vert }{n}%
,\ \ \ \ \ \ n\in \mathbb{N}\backslash \{1\}\text{.}
\end{equation*}
\end{lemma}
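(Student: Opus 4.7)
My plan is to recast the defining inequality of $\mathcal{R}^{\tau}(A,B)$ as a subordination to a M\"obius---hence convex---majorant, and then apply Rogosinski's coefficient theorem. Concretely, I set
\[
w(z) \;=\; \frac{f'(z)-1}{(A-B)\tau - B\bigl(f'(z)-1\bigr)}.
\]
The normalisation $f'(0)=1$ gives $w(0)=0$, while the hypothesis $f\in\mathcal{R}^{\tau}(A,B)$ gives $|w(z)|<1$ throughout $\mathbb{U}$; hence $w$ is a Schwarz function. Solving the algebraic identity above for $f'(z)$ produces
\[
f'(z) \;=\; 1 + \frac{(A-B)\tau\, w(z)}{1+B\, w(z)},
\]
which is precisely the subordination $f'(z)-1 \prec H(z)$ with $H(z) := (A-B)\tau z/(1+Bz)$.

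Next, I would verify that $H$ is univalent and convex on $\mathbb{U}$. Since $-1\leq B<A\leq 1$ forces $|B|\leq 1$, the unique pole $-1/B$ of $H$ lies outside $\mathbb{U}$ (or on its boundary when $|B|=1$), so $H$ is a M\"obius transformation carrying $\mathbb{U}$ onto a disc or a half-plane, both of which are convex. With this in hand, Rogosinski's classical theorem on subordination to a convex univalent majorant applies: whenever $F\prec G$ in $\mathbb{U}$ with $G$ univalent convex and $F(0)=G(0)$, every Taylor coefficient of $F$ is bounded in modulus by $|G'(0)|$.

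Reading off the expansions $f'(z)-1=\sum_{n\geq 2} n a_n z^{n-1}$ and $H(z)=(A-B)\tau z + O(z^2)$, Rogosinski then delivers $n|a_n|\leq (A-B)|\tau|$ for every $n\geq 2$, which is exactly the bound asserted by the lemma. The only non-routine step is spotting the disguised subordination and the convexity of the majorant; once that is identified the rest is bookkeeping. I would expect any direct attempt by coefficient comparison and Schwarz estimates on $w$ to stall, because the coefficients $c_k$ of $w$ satisfy the individual bound $|c_k|\leq 1$ but not a useful bound on $\sum|c_k|$, so an inductive estimate of $n a_n$ in terms of $c_{n-1}$ and the lower $a_j$ does not obviously close.
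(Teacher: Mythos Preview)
The paper does not prove this lemma; it is quoted verbatim from Dixit and Pal \cite{dix} and invoked later as a black box in the proofs of Theorems~\ref{th2} and~\ref{th4}. So there is no ``paper's own proof'' to compare against.

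Your argument is correct and is in fact the standard route to this coefficient bound. The key observation---that the defining inequality of $\mathcal{R}^{\tau}(A,B)$ is equivalent to the subordination $f'(z)-1\prec (A-B)\tau z/(1+Bz)$ with a convex univalent majorant---is exactly what makes Rogosinski's theorem bite, and your bookkeeping from $n a_n$ to the first Taylor coefficient of $H$ is accurate. One cosmetic point: when $B=0$ the map $H$ is linear rather than a genuine M\"obius transformation, so there is no pole to locate; but $H$ is then trivially convex univalent and the conclusion goes through unchanged. Your closing remark about why a naive Schwarz-coefficient approach stalls is also apt: the individual bounds $|c_k|\le 1$ on the Schwarz function do not combine well, which is precisely why the subordination viewpoint is the efficient one here.
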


\textit{The result is sharp.}

\section{The necessary and sufficient conditions}

For convenience throughout in the sequel, we use the following identities
that hold at least for $m\geq 2$ and $0\leq q<1$: 
\begin{eqnarray}
\sum\limits_{n=0}^{\infty }\binom{n+m-1}{m-1}\cdot q^{n} &=&\frac{1}{%
(1-q)^{m}} \\
\sum\limits_{n=0}^{\infty }\binom{n+m-2}{m-2}\cdot q^{n} &=&\frac{1}{%
(1-q)^{m-1}} \\
\sum\limits_{n=0}^{\infty }\binom{n+m}{m}\cdot q^{n} &=&\frac{1}{(1-q)^{m+1}}
\\
\sum\limits_{n=0}^{\infty }\binom{n+m+1}{m+1}\cdot q^{n} &=&\frac{1}{%
(1-q)^{m+2}}
\end{eqnarray}%
By simple calculation we get the following: 
\begin{equation*}
\sum\limits_{n=2}^{\infty }\binom{n+m-2}{m-1}\,q^{n-1}=\sum\limits_{n=0}^{%
\infty }\binom{n+m-1}{m-1}\,q^{n}-1
\end{equation*}%
\begin{equation*}
\sum\limits_{n=2}^{\infty }(n-1)\binom{n+m-2}{m-1}\,q^{n-1}=q~m\sum%
\limits_{n=0}^{\infty }\binom{n+m}{m}\,q^{n}.
\end{equation*}%
and 
\begin{equation*}
\sum\limits_{n=2}^{\infty }(n-1)(n-2)\binom{n+m-2}{m-1}%
\,q^{n-1}=q^{2}~m(m+1)\sum\limits_{n=0}^{\infty }\binom{n+m+1}{m+1}\,q^{n}.
\end{equation*}%
%
%
%
%
Unless otherwise mentioned, we shall assume in this paper that $\left\vert
\alpha \right\vert <\pi /2,$ $0\leq \beta <1$ while $m\geq 1$ and $0\leq q<1$%
.

First we obtain the necessary and sufficient conditions for $\Phi _{q}^{m}$
to be in the class $\mathcal{TSP}_{p}(\alpha ,\beta ).$

\begin{theorem}
\label{thmd2} \label{th1}We have $\Phi _{q}^{m}\in \mathcal{TSP}_{p}(\alpha
,\beta )$ if and only if 
\begin{equation}
\frac{2q~m}{1-q}+(2-\cos \alpha -\beta )\left[ 1-(1-q)^{m}\right] \leq \cos
\alpha -\beta .  \label{d2}
\end{equation}
\end{theorem}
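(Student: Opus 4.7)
The plan is to apply Lemma \ref{lem1} directly to the coefficients of $\Phi_q^m$ and then simplify the resulting series using the identities listed at the start of Section 2.

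First I would write down, using the coefficients $|a_n| = \binom{n+m-2}{m-1} q^{n-1}(1-q)^m$ of $\Phi_q^m$ (which is of the form (\ref{m1})), that by Lemma \ref{lem1} membership in $\mathcal{TSP}_p(\alpha,\beta)$ is equivalent to
\begin{equation*}
(1-q)^m \sum_{n=2}^\infty (2n-\cos\alpha-\beta) \binom{n+m-2}{m-1} q^{n-1} \leq \cos\alpha-\beta.
\end{equation*}
The key algebraic trick is to decompose $2n-\cos\alpha-\beta = 2(n-1) + (2-\cos\alpha-\beta)$, which splits the sum into two pieces that match the identities given in the paper.

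Next I would evaluate each piece. For the $2(n-1)$ part, the identity
\begin{equation*}
\sum_{n=2}^\infty (n-1)\binom{n+m-2}{m-1} q^{n-1} = qm\sum_{n=0}^\infty \binom{n+m}{m} q^n = \frac{qm}{(1-q)^{m+1}}
\end{equation*}
gives a contribution of $\dfrac{2qm}{1-q}$ after multiplying by $(1-q)^m$. For the constant part, the identity
\begin{equation*}
\sum_{n=2}^\infty \binom{n+m-2}{m-1} q^{n-1} = \sum_{n=0}^\infty \binom{n+m-1}{m-1} q^n - 1 = \frac{1}{(1-q)^m} - 1
\end{equation*}
yields $(2-\cos\alpha-\beta)[1-(1-q)^m]$ after multiplying by $(1-q)^m$. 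Adding these and moving $\cos\alpha-\beta$ to the right produces inequality (\ref{d2}).

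Since Lemma \ref{lem1} provides an \emph{if and only if} characterization and every step in the simplification is a reversible equality, the equivalence in the theorem follows immediately. There is no serious obstacle here; the main thing to be careful about is bookkeeping with the index shift (writing $2n$ as $2(n-1)+2$) so that the summation identities apply cleanly, and keeping track of the $(1-q)^m$ factor that cancels against the denominators.
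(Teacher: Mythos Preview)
Your proposal is correct and follows essentially the same approach as the paper: apply Lemma~\ref{lem1}, rewrite $2n-\cos\alpha-\beta=2(n-1)+(2-\cos\alpha-\beta)$, and use the listed binomial identities to evaluate the two resulting sums. The paper's proof is line-for-line the same computation, including the same index shift and the same cancellation of the $(1-q)^m$ factor.
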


\begin{proof}
Since%
\begin{equation}
\Phi_q^m(z)=z-\sum\limits_{n=2}^{\infty }\binom{n+m-2}{m-1}\cdot
q^{n-1}(1-q)^mz^{n}  \label{n5}
\end{equation}%
in view of Lemma \ref{lem1}, it suffices to show that%
\begin{equation}
\sum\limits_{n=2}^{\infty }(2n-\cos \alpha -\beta )\binom{n+m-2}{m-1}\cdot
q^{n-1}(1-q)^m\leq \cos \alpha -\beta .  \label{n1}
\end{equation}%
Writing 
\begin{equation*}
n=(n-1)+1
\end{equation*}
in (\ref{n1}) we have 
\begin{eqnarray*}
&&\sum\limits_{n=2}^{\infty }(2n-\cos \alpha -\beta )\binom{n+m-2}{m-1}\cdot
q^{n-1}(1-q)^m \\
&=&\sum\limits_{n=2}^{\infty }(2(n-1)+2-\cos \alpha -\beta )\binom{n+m-2}{m-1%
}\cdot q^{n-1}(1-q)^m \\
&=&2\sum\limits_{n=2}^{\infty }(n-1)\binom{n+m-2}{m-1}\cdot q^{n-1}(1-q)^m
+(2-\cos \alpha -\beta )\sum\limits_{n=2}^{\infty }\binom{n+m-2}{m-1}\cdot
q^{n-1}(1-q)^m \\
&=&2q~m(1-q)^m \sum\limits_{n=0}^{\infty}\binom{n+m}{m}\,q^{n}+(2-\cos
\alpha -\beta )(1-q)^m \left[\sum\limits_{n=0}^{\infty}\binom{n+m-1}{m-1}%
\,q^{n}-1\right] \\
&=&\frac{2q~m}{1-q}+(2-\cos \alpha -\beta )\left[1-(1-q)^m\right].
\end{eqnarray*}

But this last expression is bounded above by $\cos \alpha -\beta ~$if and
only if~(\ref{d2}) holds.
\end{proof}

\begin{theorem}
\label{th11}We have $\Phi _{q}^{m}\in $ $\mathcal{UCT}_{p}(\alpha ,\beta )$
if and only if 
\begin{equation}
\frac{2q^{2}~m(m+1)}{(1-q)^{2}}+(6-\cos \alpha -\beta )\frac{q~m}{1-q}%
+(2-\cos \alpha -\beta )\left[ 1-(1-q)^{m}\right] \leq \cos \alpha -\beta .
\label{lp}
\end{equation}
\end{theorem}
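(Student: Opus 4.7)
The plan is to mimic the proof of Theorem \ref{th1} but with the heavier weight $n(2n-\cos\alpha-\beta)$ coming from Lemma \ref{lem2}. Since $\Phi_q^m$ has the form (\ref{m1}) with $|a_n| = \binom{n+m-2}{m-1} q^{n-1}(1-q)^m$, Lemma \ref{lem2} tells us that $\Phi_q^m \in \mathcal{UCT}_p(\alpha,\beta)$ if and only if
\begin{equation*}
\sum_{n=2}^{\infty} n(2n-\cos\alpha-\beta)\binom{n+m-2}{m-1} q^{n-1}(1-q)^m \;\leq\; \cos\alpha-\beta.
\end{equation*}
So everything reduces to evaluating this sum in closed form and checking that the resulting inequality is exactly (\ref{lp}).

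The key algebraic step is to split $n(2n-\cos\alpha-\beta)$ into pieces whose coefficients match the three summation identities displayed just before Theorem \ref{thmd2}. Writing $n = (n-1)+1$ and $2n^2 = 2(n-1)(n-2) + 6(n-1) + 2$, I obtain the decomposition
\begin{equation*}
n(2n-\cos\alpha-\beta) = 2(n-1)(n-2) + (6-\cos\alpha-\beta)(n-1) + (2-\cos\alpha-\beta).
\end{equation*}
Substituting and pulling the factor $(1-q)^m$ through, the sum breaks into three pieces corresponding respectively to the identities
\begin{equation*}
\sum_{n=2}^{\infty}(n-1)(n-2)\binom{n+m-2}{m-1}q^{n-1} = \frac{q^2 m(m+1)}{(1-q)^{m+2}},
\end{equation*}
\begin{equation*}
\sum_{n=2}^{\infty}(n-1)\binom{n+m-2}{m-1}q^{n-1} = \frac{qm}{(1-q)^{m+1}},
\end{equation*}
\begin{equation*}
\sum_{n=2}^{\infty}\binom{n+m-2}{m-1}q^{n-1} = \frac{1}{(1-q)^m}-1,
\end{equation*}
all recorded in the preamble to this section. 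After multiplying each by $(1-q)^m$, the three pieces become precisely $\dfrac{2q^2m(m+1)}{(1-q)^2}$, $(6-\cos\alpha-\beta)\dfrac{qm}{1-q}$, and $(2-\cos\alpha-\beta)[1-(1-q)^m]$, which sum to the left-hand side of (\ref{lp}).

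The only obstacle is spotting the correct decomposition of $n(2n-\cos\alpha-\beta)$ so that every resulting series matches one of the three tabulated identities; once that is in place the verification is a mechanical substitution, and the necessity and sufficiency assertions of Lemma \ref{lem2} give the two directions at once.
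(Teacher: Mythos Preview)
Your proposal is correct and follows essentially the same route as the paper: invoke Lemma~\ref{lem2}, decompose $n(2n-\cos\alpha-\beta)=2(n-1)(n-2)+(6-\cos\alpha-\beta)(n-1)+(2-\cos\alpha-\beta)$, and apply the three tabulated identities to obtain the closed form on the left of~(\ref{lp}). The paper presents the decomposition in two stages (first $n=(n-1)+1$ and $n^2=(n-1)(n-2)+3(n-1)+1$, then regrouping), but the resulting computation is identical to yours.
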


\begin{proof}
In view of Lemma \ref{lem2}, we must show that%
\begin{equation}
\sum\limits_{n=2}^{\infty }n(2n-\cos \alpha -\beta )\binom{n+m-2}{m-1}\cdot
q^{n-1}(1-q)^m\leq \cos \alpha -\beta .  \label{we}
\end{equation}%
Writing%
\begin{equation*}
n=(n-1)+1
\end{equation*}%
and%
\begin{equation*}
n^{2}=(n-1)(n-2)+3(n-1)+1
\end{equation*}

in (\ref{we}) 
\begin{eqnarray*}
&&\sum\limits_{n=2}^{\infty }n(2n-\cos \alpha -\beta )\binom{n+m-2}{m-1}%
\cdot q^{n-1}(1-q)^me^{-m} \\
&=&\sum\limits_{n=2}^{\infty }(2(n-1)(n-2)+6(n-1))\binom{n+m-2}{m-1}\cdot
q^{n-1}(1-q)^m \\
&+&\sum\limits_{n=2}^{\infty }(n-1)(-\cos \alpha -\beta )\binom{n+m-2}{m-1}%
\cdot q^{n-1}(1-q)^m \\
&&+\sum\limits_{n=2}^{\infty }(2-\cos \alpha -\beta )\binom{n+m-2}{m-1}\cdot
q^{n-1}(1-q)^m \\
&=&2q^2~m(m+1)(1-q)^m\sum\limits_{n=0}^{\infty}\binom{n+m+1}{m+1}%
\,q^{n}+(6-\cos \alpha -\beta )q~m (1-q)^m\sum\limits_{n=0}^{\infty}\binom{%
n+m}{m}\,q^{n} \\
&+&(2-\cos \alpha -\beta )(1-q)^m\left[\sum\limits_{n=0}^{\infty}\binom{n+m-1%
}{m-1}\,q^{n}-1\right] \\
&=&2q^2~m(m+1)(1-q)^m\frac{1}{(1-q)^{m+2}}+(6-\cos \alpha -\beta )q~m (1-q)^m%
\frac{1}{(1-q)^{m+1}} \\
&+&(2-\cos \alpha -\beta )(1-q)^m\left[\frac{1}{(1-q)^{m}}-1\right] \\
&=&\frac{2q^2~m(m+1)}{(1-q)^{2}}+(6-\cos \alpha -\beta ) \frac{q~m}{1-q}%
+(2-\cos\alpha -\beta )\left[1-(1-q)^{m}\right] \\
\end{eqnarray*}

Therefore, we see that the last expression is bounded above by $\cos \alpha
-\beta ~$if (\ref{lp}) is satisfied.
\end{proof}

\section{Inclusion Properties}

Making use of Lemma \ref{lem3}, we will study the action of the Pascal
distribution series on the class $\mathcal{TSP}_{p}(\alpha ,\beta ).$

\begin{theorem}
\label{th2}Let $m>1.~$If $\ f\in \mathcal{R}^{\tau }(A,B),\ $then $\mathcal{I%
}_{q}^{m}f(z)$ $\in $ $\mathcal{TSP}_{p}(\alpha ,\beta )$ if 
\begin{equation}
(A-B)|\tau |\left[ \frac{{}}{{}}2\left[ 1-(1-q)^{m}\frac{{}}{{}}\right] -%
\frac{\cos \alpha +\beta }{q(m-1)}\left[ (1-q)-(1-q)^{m}-q(m-1)(1-q)^{m}%
\right] \right] \allowbreak \leq \cos \alpha -\beta .  \label{d3}
\end{equation}
\end{theorem}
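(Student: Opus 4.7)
The plan is to reduce membership in $\mathcal{TSP}_p(\alpha,\beta)$ to a coefficient estimate via Lemma~\ref{lem1}, then bound the coefficients of $\mathcal{I}_q^m f$ using Lemma~\ref{lem3}. Since
$$\mathcal{I}_q^m f(z)=z+\sum_{n=2}^{\infty}\binom{n+m-2}{m-1}q^{n-1}(1-q)^m a_n z^n,$$
Lemma~\ref{lem1} reduces the problem to establishing
$$\sum_{n=2}^{\infty}(2n-\cos\alpha-\beta)\binom{n+m-2}{m-1}q^{n-1}(1-q)^m |a_n|\leq \cos\alpha-\beta.$$
Applying the bound $|a_n|\leq (A-B)|\tau|/n$ from Lemma~\ref{lem3}, it suffices to show
$$(A-B)|\tau|\,(1-q)^m\sum_{n=2}^{\infty}\frac{2n-\cos\alpha-\beta}{n}\binom{n+m-2}{m-1}q^{n-1}\leq \cos\alpha-\beta.$$

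Next, I would split $\dfrac{2n-\cos\alpha-\beta}{n}=2-\dfrac{\cos\alpha+\beta}{n}$ to break the last sum into two pieces. The first piece, $2(1-q)^m\sum_{n\geq 2}\binom{n+m-2}{m-1}q^{n-1}$, equals $2\bigl[1-(1-q)^m\bigr]$ by exactly the same rearrangement used in the proof of Theorem~\ref{th1}. This accounts for the leading $2[1-(1-q)^m]$ appearing inside the bracket of~(\ref{d3}).

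The crux is evaluating the weighted series
$$S:=(1-q)^m\sum_{n=2}^{\infty}\frac{1}{n}\binom{n+m-2}{m-1}q^{n-1},$$
which has no analogue in the proofs of Theorems~\ref{th1} and~\ref{th11}; this is where the hypothesis $m>1$ enters decisively, since it permits the algebraic identity $\frac{1}{n}\binom{n+m-2}{m-1}=\frac{1}{m-1}\binom{n+m-2}{m-2}$. Thereby $S=\dfrac{(1-q)^m}{m-1}\sum_{n\geq 2}\binom{n+m-2}{m-2}q^{n-1}$, and the remaining sum is handled by isolating the $n=0,1$ contributions in the identity $\sum_{n\geq 0}\binom{n+m-2}{m-2}q^n=(1-q)^{-(m-1)}$ recorded at the start of Section~2. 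A short simplification then gives
$$S=\frac{(1-q)-(1-q)^m-q(m-1)(1-q)^m}{q(m-1)},$$
which is exactly the second bracketed expression in~(\ref{d3}). Assembling the two pieces yields precisely the bound~(\ref{d3}), completing the argument; the main obstacle is the evaluation of $S$, while the rest is an assembly of manipulations already familiar from the earlier theorems.
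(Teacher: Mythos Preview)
Your argument is correct and follows the paper's proof essentially line for line: both invoke Lemma~\ref{lem1} and Lemma~\ref{lem3}, split $\dfrac{2n-\cos\alpha-\beta}{n}=2-\dfrac{\cos\alpha+\beta}{n}$, and evaluate the resulting $1/n$-weighted series via the identity $\dfrac{1}{n}\binom{n+m-2}{m-1}=\dfrac{1}{m-1}\binom{n+m-2}{m-2}$ together with $\sum_{n\geq 0}\binom{n+m-2}{m-2}q^{n}=(1-q)^{-(m-1)}$. There is no substantive difference between your route and the paper's.
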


\begin{proof}
In view of Lemma \ref{lem1}, it suffices to show that%
\begin{equation*}
\sum\limits_{n=2}^{\infty }(2n-\cos \alpha -\beta )\binom{n+m-2}{m-1}\cdot
q^{n-1}(1-q)^m\left\vert a_{n}\right\vert \leq \cos \alpha -\beta .
\end{equation*}

Since $f\in \mathcal{R}^{\tau }(A,B),$ then by Lemma \ref{lem3}, we have%
\begin{equation}
\left\vert a_{n}\right\vert \leq \frac{(A-B)\left\vert \tau \right\vert }{n}.
\label{vv}
\end{equation}

Thus, we have 
\begin{eqnarray*}
&&\sum\limits_{n=2}^{\infty }(2n-\cos \alpha -\beta )\binom{n+m-2}{m-1}\cdot
q^{n-1}(1-q)^{m}\left\vert a_{n}\right\vert \\
&\leq &(A-B)\left\vert \tau \right\vert \left[ \sum\limits_{n=2}^{\infty }%
\frac{1}{n}(2n-\cos \alpha -\beta )\binom{n+m-2}{m-1}\cdot q^{n-1}(1-q)^{m}%
\right] \\
&=&(A-B)\left\vert \tau \right\vert (1-q)^{m}\left[ 2\sum\limits_{n=2}^{%
\infty }\binom{n+m-2}{m-1}\cdot q^{n-1}-(\cos \alpha +\beta
)\sum\limits_{n=2}^{\infty }\frac{1}{n}\binom{n+m-2}{m-1}\cdot q^{n-1}\right]
\\
&=&(A-B)|\tau |(1-q)^{m}\left[ 2\left[ \sum\limits_{n=0}^{\infty }\binom{%
n+m-1}{m-1}\,q^{n}-1\right] \right. \\
&-&\left. \frac{\cos \alpha +\beta }{q(m-1)}\left[ \sum_{n=0}^{\infty }%
\binom{n+m-2}{m-2}\,q^{n}-1-(m-1)q\right] \right] \\
&=&(A-B)|\tau |\left[ \frac{{}}{{}}2\left[ 1-(1-q)^{m}\frac{{}}{{}}\right] -%
\frac{\cos \alpha +\beta }{q(m-1)}\left[ (1-q)-(1-q)^{m}-q(m-1)(1-q)^{m}%
\right] \right]
\end{eqnarray*}%
But this last expression is bounded by $\cos \alpha -\beta $, if (\ref{d3})
holds. This completes the proof of Theorem \ref{th2}.
\end{proof}

Applying Lemma \ref{lem2} and using the same technique as in the proof of
Theorem \ref{th2}, we have the following result.

\begin{theorem}
\label{th4}If $\ f\in \mathcal{R}^{\tau }(A,B),\ $then $\mathcal{I}_{q}^{m}f$
is in $\mathcal{UCT}_{p}(\alpha ,\beta )$ if 
\begin{equation}
(A-B)|\tau |\left( \frac{2q~m}{1-q}+(2-\cos \alpha -\beta )\left[ 1-(1-q)^{m}%
\right] \right) \leq \cos \alpha -\beta .
\end{equation}
\end{theorem}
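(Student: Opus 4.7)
The plan is to run the same argument used for Theorem \ref{th2}, but with Lemma \ref{lem2} replacing Lemma \ref{lem1}. Since the coefficients of $\mathcal{I}_q^m f$ are $\binom{n+m-2}{m-1}q^{n-1}(1-q)^m a_n$ for $n\geq 2$, Lemma \ref{lem2} (the coefficient characterization \eqref{b1}) reduces the claim $\mathcal{I}_q^m f \in \mathcal{UCT}_p(\alpha,\beta)$ to verifying
$$\sum_{n=2}^{\infty} n(2n-\cos\alpha-\beta)\binom{n+m-2}{m-1}q^{n-1}(1-q)^m \,|a_n| \leq \cos\alpha-\beta.$$

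The key observation is how Lemma \ref{lem3} interacts with this sum. Since $f\in\mathcal{R}^\tau(A,B)$, Lemma \ref{lem3} gives $|a_n|\leq (A-B)|\tau|/n$, so $n|a_n|\leq (A-B)|\tau|$ for every $n\geq 2$. Substituting this bound absorbs the extra factor of $n$ that distinguishes the $\mathcal{UCT}_p$ coefficient inequality from the $\mathcal{TSP}_p$ one, and the sum to be estimated becomes
$$(A-B)|\tau|\sum_{n=2}^{\infty}(2n-\cos\alpha-\beta)\binom{n+m-2}{m-1}q^{n-1}(1-q)^m.$$

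This is precisely the series that was already evaluated in the proof of Theorem \ref{thmd2}: writing $n=(n-1)+1$ and applying the identities $\sum_{n\geq 0}\binom{n+m-1}{m-1}q^n=(1-q)^{-m}$ and $\sum_{n\geq 2}(n-1)\binom{n+m-2}{m-1}q^{n-1}=qm\sum_{n\geq 0}\binom{n+m}{m}q^n=qm(1-q)^{-(m+1)}$ collapses it to
$$\frac{2qm}{1-q}+(2-\cos\alpha-\beta)\bigl[1-(1-q)^m\bigr].$$

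Requiring $(A-B)|\tau|$ times this quantity to be at most $\cos\alpha-\beta$ is exactly the hypothesis of the theorem, so the proof is complete. There is no real obstacle here: the entire argument is essentially free once one notices that the $1/n$ decay from Lemma \ref{lem3} exactly cancels the additional $n$ weight appearing in Lemma \ref{lem2}, which is why the sufficient condition for $\mathcal{UCT}_p(\alpha,\beta)$ coincides with the one obtained in Theorem \ref{thmd2} for membership of $\Phi_q^m$ in $\mathcal{TSP}_p(\alpha,\beta)$, multiplied by $(A-B)|\tau|$.
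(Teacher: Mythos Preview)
Your proposal is correct and follows exactly the approach the paper intends: the paper does not even spell out a proof for this theorem, stating only that one should apply Lemma \ref{lem2} in place of Lemma \ref{lem1} and repeat the technique of Theorem \ref{th2}. Your observation that the $1/n$ from Lemma \ref{lem3} cancels the extra factor $n$ in \eqref{b1}, reducing the computation to the sum already evaluated in Theorem \ref{thmd2}, is precisely the mechanism behind this.
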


\section{Properties of a special function}

\begin{theorem}
\label{th3} If the function $\mathcal{G}_{q}^{m}$ is given by 
\begin{equation}
\mathcal{G}_{q}^{m}(z):=\int_{0}^{z}\frac{\Phi _{q}^{m}(t)}{t}dt,\;z\in 
\mathbb{U},  \label{pl}
\end{equation}%
then $\mathcal{G}_{q}^{m}\in \mathcal{UCT}_{p}(\alpha ,\beta )$ if and only
if the inequality (\ref{d2}) holds.
\end{theorem}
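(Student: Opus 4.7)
The plan is to reduce the claim for $\mathcal{G}_q^m$ to a series identity that already appears inside the proof of Theorem \ref{th1}. The key observation is that the integral operator in (\ref{pl}) introduces a factor of $1/n$ into each Taylor coefficient, which exactly cancels the extra $n$ appearing in Lemma \ref{lem2} relative to Lemma \ref{lem1}.

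First I would expand $\mathcal{G}_q^m$ as a power series. Using (\ref{n5}), we have
\begin{equation*}
\frac{\Phi_q^m(t)}{t} = 1 - \sum_{n=2}^{\infty} \binom{n+m-2}{m-1} q^{n-1}(1-q)^m t^{n-1},
\end{equation*}
and termwise integration yields
\begin{equation*}
\mathcal{G}_q^m(z) = z - \sum_{n=2}^{\infty} \frac{1}{n}\binom{n+m-2}{m-1} q^{n-1}(1-q)^m z^n,
\end{equation*}
which is of the form (\ref{m1}) with $|a_n| = \tfrac{1}{n}\binom{n+m-2}{m-1} q^{n-1}(1-q)^m$.

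Next I would apply Lemma \ref{lem2}: $\mathcal{G}_q^m \in \mathcal{UCT}_p(\alpha,\beta)$ if and only if
\begin{equation*}
\sum_{n=2}^{\infty} n(2n-\cos\alpha-\beta)\cdot \frac{1}{n}\binom{n+m-2}{m-1} q^{n-1}(1-q)^m \leq \cos\alpha-\beta.
\end{equation*}
The factor of $n$ from Lemma \ref{lem2} cancels the $1/n$ in the coefficients, so this inequality reduces to
\begin{equation*}
\sum_{n=2}^{\infty} (2n-\cos\alpha-\beta)\binom{n+m-2}{m-1} q^{n-1}(1-q)^m \leq \cos\alpha-\beta,
\end{equation*}
which is precisely the condition (\ref{n1}) that was shown to be equivalent to (\ref{d2}) in the proof of Theorem \ref{th1}. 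Invoking that computation (via the decomposition $n=(n-1)+1$ and the identities at the start of Section 2) completes the proof in both directions, since Lemma \ref{lem2} is a biconditional.

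There is no real obstacle here; the only thing to be careful about is to state the equivalence as an \textit{if and only if}, which is legitimate because both Lemma \ref{lem2} and the manipulation leading to (\ref{d2}) in Theorem \ref{th1} are equivalences, not one-sided implications. The whole proof amounts to noting the cancellation of $n$ with $1/n$ and quoting the already-established identity.
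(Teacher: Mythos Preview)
Your proposal is correct and follows essentially the same route as the paper: expand $\mathcal{G}_q^m$ as a power series, apply Lemma \ref{lem2}, cancel the factor $n$ against the $1/n$ introduced by integration, and then appeal to the computation already done in Theorem \ref{th1}. If anything, you are slightly more careful than the paper in justifying the termwise integration and in noting explicitly that both Lemma \ref{lem2} and the reduction to (\ref{d2}) are genuine equivalences.
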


\begin{proof}
Since%
\begin{equation*}
\mathcal{G}_{q}^{m}(z)=z-\sum_{n=2}^{\infty }\binom{n+m-2}{m-1}%
\,q^{n-1}(1-q)^{m}\frac{z^{n}}{n},\;z\in \mathbb{U},
\end{equation*}%
then by Lemma \ref{lem2}, we need only to show that%
\begin{equation*}
\sum\limits_{n=2}^{\infty }n(2n-\cos \alpha -\beta )\times \frac{1}{n}\binom{%
n+m-2}{m-1}\,q^{n-1}(1-q)^{m}\leq \cos \alpha -\beta ,
\end{equation*}

or, equivalently%
\begin{equation}
\sum\limits_{n=2}^{\infty }(2n-\cos \alpha -\beta )\binom{n+m-2}{m-1}\cdot
q^{n-1}(1-q)^m\leq \cos \alpha -\beta .  \label{gg}
\end{equation}
The remaining part of the proof of Theorem \ref{th3} is similar to that of
Theorem \ref{th1}, and so we omit the details.
\end{proof}

\bigskip

\begin{theorem}
\label{th6}If $m>1,$ then the function $\mathcal{G}_{q}^{m}\mathcal{\ }\in 
\mathcal{TSP}_{p}(\alpha ,\beta )$ if and only if ~%
\begin{equation*}
\frac{{}}{{}}2\left[ 1-(1-q)^{m}\frac{{}}{{}}\right] -\frac{\cos \alpha
+\beta }{q(m-1)}\left[ (1-q)-(1-q)^{m}-q(m-1)(1-q)^{m}\right] \leq \cos
\alpha -\beta .
\end{equation*}
\end{theorem}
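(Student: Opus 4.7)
The plan is to mirror the proof of Theorem \ref{th3}, but this time applying Lemma \ref{lem1} instead of Lemma \ref{lem2}, which is the only structural change since $\mathcal{G}_q^m$ is the same function. Starting from
\begin{equation*}
\mathcal{G}_q^m(z)=z-\sum_{n=2}^{\infty}\binom{n+m-2}{m-1}q^{n-1}(1-q)^m\frac{z^n}{n},
\end{equation*}
the coefficient of $z^n$ has absolute value $\frac{1}{n}\binom{n+m-2}{m-1}q^{n-1}(1-q)^m$, so by Lemma \ref{lem1} membership in $\mathcal{TSP}_p(\alpha,\beta)$ is equivalent to
\begin{equation*}
(1-q)^m\sum_{n=2}^{\infty}(2n-\cos\alpha-\beta)\frac{1}{n}\binom{n+m-2}{m-1}q^{n-1}\le \cos\alpha-\beta.
\end{equation*}

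Next I would split the sum into two pieces according to $2n-\cos\alpha-\beta=2n-(\cos\alpha+\beta)$. The first piece is $2\sum_{n=2}^{\infty}\binom{n+m-2}{m-1}q^{n-1}$, which by the identity already used in the proof of Theorem \ref{th1} equals $2[\tfrac{1}{(1-q)^m}-1]$, contributing $2[1-(1-q)^m]$ after multiplication by $(1-q)^m$. The second piece is $-(\cos\alpha+\beta)\sum_{n=2}^{\infty}\frac{1}{n}\binom{n+m-2}{m-1}q^{n-1}$, and this is precisely where the hypothesis $m>1$ enters: using the elementary identity
\begin{equation*}
\frac{1}{n}\binom{n+m-2}{m-1}=\frac{1}{m-1}\binom{n+m-2}{m-2},
\end{equation*}
the factor of $1/n$ is absorbed into a binomial coefficient of the second type, to which identity $(2)$ in the paper applies.

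Applying identity $(2)$ then gives
\begin{equation*}
\sum_{n=2}^{\infty}\binom{n+m-2}{m-2}q^n=\frac{1}{(1-q)^{m-1}}-1-(m-1)q,
\end{equation*}
so the second piece becomes $-\dfrac{\cos\alpha+\beta}{q(m-1)}\bigl[\tfrac{1}{(1-q)^{m-1}}-1-(m-1)q\bigr]$, and multiplication by $(1-q)^m$ produces the bracketed expression $-\dfrac{\cos\alpha+\beta}{q(m-1)}\bigl[(1-q)-(1-q)^m-q(m-1)(1-q)^m\bigr]$ of the statement. Adding both pieces yields the claimed inequality as the necessary and sufficient form of the Lemma \ref{lem1} criterion.

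The only genuine obstacle is algebraic bookkeeping: one must correctly recognize that reducing $\frac{1}{n}\binom{n+m-2}{m-1}$ to $\frac{1}{m-1}\binom{n+m-2}{m-2}$ is what forces $m>1$ (the case $m=1$ would produce $\log$-type series rather than the closed forms available through the stated identities) and that after this reduction the required shift of summation index is $q^{n-1}=q^{n}/q$, accounting for the extra $1/q$ factor in front. Once these two substitutions are tracked carefully, the rest is direct cancellation against $(1-q)^m$ and collection of terms; as in Theorem \ref{th3}, the proof concludes by observing equivalence of the resulting bound with the displayed inequality.
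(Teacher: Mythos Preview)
Your proposal is correct and is exactly the argument the paper has in mind: the paper omits the proof entirely, saying only that it is ``lines similar to the proof of Theorem \ref{th3}'', and your write-up carries out precisely that analogue by replacing Lemma \ref{lem2} with Lemma \ref{lem1}. The resulting series $\sum_{n\ge 2}\frac{1}{n}(2n-\cos\alpha-\beta)\binom{n+m-2}{m-1}q^{n-1}(1-q)^m$ is in fact the very computation already performed in the proof of Theorem \ref{th2} (without the factor $(A-B)|\tau|$), so one could equally well just invoke that calculation; your explicit use of $\frac{1}{n}\binom{n+m-2}{m-1}=\frac{1}{m-1}\binom{n+m-2}{m-2}$ together with identity (2.2) reproduces it step by step.
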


The proof of Theorem \ref{th6} is lines similar to the proof of Theorem \ref%
{th3}, so we omitted the proof of Theorem \ref{th6}.

\section{Corollaries and consequences}

By specializing the parameter $\beta =0$ in Theorems \ref{th1}-\ref{th6}, we
obtain the following corollaries.

\begin{corollary}
We have $\Phi _{q}^{m}\in $ $\mathcal{TSP}_{p}(\alpha )$ if and only if 
\begin{equation}
\frac{2q~m}{1-q}+(2-\cos \alpha )\left[ 1-(1-q)^{m}\right] \leq \cos \alpha .
\label{hh}
\end{equation}
\end{corollary}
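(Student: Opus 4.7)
The plan is to obtain this corollary as an immediate specialization of Theorem \ref{th1} (equivalently, Theorem \ref{thmd2}), using the fact recorded in the introduction that $\mathcal{SP}_{p}(\alpha,0)=\mathcal{SP}_{p}(\alpha)$, and hence $\mathcal{TSP}_{p}(\alpha,0)=\mathcal{TSP}_{p}(\alpha)$ after intersecting with $\mathcal{T}$. So no independent coefficient computation is required; the work was already carried out in the proof of Theorem \ref{th1}.

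First, I would invoke Theorem \ref{th1} directly: for arbitrary parameters in the stated range, $\Phi_{q}^{m}\in\mathcal{TSP}_{p}(\alpha,\beta)$ holds if and only if the inequality (\ref{d2}) is satisfied, namely
\[
\frac{2qm}{1-q}+(2-\cos\alpha-\beta)\bigl[1-(1-q)^{m}\bigr]\leq \cos\alpha-\beta.
\]
Then I would specialize to $\beta=0$. The left-hand side becomes $\frac{2qm}{1-q}+(2-\cos\alpha)[1-(1-q)^{m}]$ and the right-hand side becomes $\cos\alpha$, which is exactly the inequality (\ref{hh}) in the statement.

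Finally, I would note that the class identity $\mathcal{TSP}_{p}(\alpha,0)=\mathcal{TSP}_{p}(\alpha)$ turns the $\beta=0$ case of Theorem \ref{th1} into the claimed characterisation of $\Phi_{q}^{m}\in\mathcal{TSP}_{p}(\alpha)$, which completes the argument. Since the corollary is a pure substitution of a single parameter value into a previously proved ``if and only if'' statement, there is no real obstacle; the only thing to verify is that the implicit hypothesis $|\alpha|<\pi/2$ used in Theorem \ref{th1} is inherited here, which it is by the running assumption stated just before Theorem \ref{thmd2}.
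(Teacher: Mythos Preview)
Your proposal is correct and matches the paper's approach exactly: the paper obtains this corollary simply by specializing $\beta=0$ in Theorem~\ref{th1}, which is precisely what you do. There is nothing to add.
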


\begin{corollary}
We have $\Phi _{q}^{m}\in $ $\mathcal{UCT}_{p}(\alpha )$ if and only if 
\begin{equation}
\frac{2q^{2}~m(m+1)}{(1-q)^{2}}+(6-\cos \alpha )\frac{q~m}{1-q}+(2-\cos
\alpha )\left[ 1-(1-q)^{m}\right] \leq \cos \alpha .
\end{equation}
\end{corollary}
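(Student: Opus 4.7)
The plan is to obtain this corollary as a direct specialization of Theorem \ref{th11}, which already supplies the two-parameter necessary and sufficient condition for $\Phi_{q}^{m}\in\mathcal{UCT}_{p}(\alpha,\beta)$, namely
\[
\frac{2q^{2}\,m(m+1)}{(1-q)^{2}}+(6-\cos\alpha-\beta)\frac{q\,m}{1-q}+(2-\cos\alpha-\beta)\bigl[1-(1-q)^{m}\bigr]\leq \cos\alpha-\beta.
\]
Since the class $\mathcal{UCT}_{p}(\alpha)$ was defined in the introduction to be exactly $\mathcal{UCT}_{p}(\alpha,0)$, the statement $\Phi_{q}^{m}\in\mathcal{UCT}_{p}(\alpha)$ is synonymous with $\Phi_{q}^{m}\in\mathcal{UCT}_{p}(\alpha,0)$. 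So I would simply set $\beta=0$ in the displayed inequality of Theorem \ref{th11}.

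Carrying this out, the first term $\dfrac{2q^{2}m(m+1)}{(1-q)^{2}}$ is independent of $\beta$ and is unchanged. The coefficient $(6-\cos\alpha-\beta)$ collapses to $(6-\cos\alpha)$, and similarly $(2-\cos\alpha-\beta)$ collapses to $(2-\cos\alpha)$. On the right-hand side, $\cos\alpha-\beta$ reduces to $\cos\alpha$. Assembling these pieces, the inequality becomes
\[
\frac{2q^{2}\,m(m+1)}{(1-q)^{2}}+(6-\cos\alpha)\frac{q\,m}{1-q}+(2-\cos\alpha)\bigl[1-(1-q)^{m}\bigr]\leq \cos\alpha,
\]
which is precisely the claimed condition.

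Since the theorem being specialized is an \emph{iff} statement, both directions transfer for free under the substitution $\beta=0$, and there is no separate work required to verify necessity versus sufficiency. There is essentially no obstacle here; the only thing to be careful about is that the ambient hypothesis $|\alpha|<\pi/2$ is retained (so that $\cos\alpha>0$ and the class $\mathcal{UCT}_{p}(\alpha)$ is well defined) and that $m\geq 1$, $0\leq q<1$ as standing assumptions of Section~2. A one-line proof suffices: the corollary follows from Theorem \ref{th11} upon taking $\beta=0$.
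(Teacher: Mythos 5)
Your proposal is correct and is exactly the paper's own route: the authors state that the corollaries of Section~5 are obtained by specializing $\beta =0$ in Theorems \ref{th1}--\ref{th6}, and your substitution into the inequality of Theorem \ref{th11} reproduces the stated condition verbatim. Nothing further is needed.
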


\begin{corollary}
Let $m>1\mathit{\ }.$ If $\ f\in \mathcal{R}^{\tau }(A,B),\ $then $\mathcal{I%
}_{q}^{m}f$ $\in \mathcal{TSP}_{p}(\alpha )$ if 
\begin{equation}
(A-B)|\tau |\left[ \frac{{}}{{}}2\left[ 1-(1-q)^{m}\frac{{}}{{}}\right] -%
\frac{\cos \alpha }{q(m-1)}\left[ (1-q)-(1-q)^{m}-q(m-1)(1-q)^{m}\right] %
\right] \allowbreak \leq \cos \alpha .
\end{equation}
\end{corollary}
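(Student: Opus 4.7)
The plan is to obtain this corollary as the immediate specialization $\beta=0$ of Theorem \ref{th2}, following the announcement preceding the statement (``By specializing the parameter $\beta=0$ in Theorems \ref{th1}--\ref{th6}''). As the introduction remarks, $\mathcal{SP}_{p}(\alpha,0)=\mathcal{SP}_{p}(\alpha)$, and intersecting with $\mathcal{T}$ gives $\mathcal{TSP}_{p}(\alpha,0)=\mathcal{TSP}_{p}(\alpha)$, so the target class collapses as required.

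First I would substitute $\beta=0$ into hypothesis (\ref{d3}) of Theorem \ref{th2}. The right-hand side $\cos\alpha-\beta$ becomes $\cos\alpha$, and inside the bracket the factor $\cos\alpha+\beta$ collapses to $\cos\alpha$; the remaining expression
\[
2\bigl[1-(1-q)^{m}\bigr]-\frac{\cos\alpha}{q(m-1)}\bigl[(1-q)-(1-q)^{m}-q(m-1)(1-q)^{m}\bigr]
\]
is untouched, and after multiplying by $(A-B)|\tau|$ one recovers exactly the inequality displayed in the corollary. The hypothesis $m>1$ carries over verbatim from Theorem \ref{th2}, where it is needed so that the identity $\tfrac{1}{n}\binom{n+m-2}{m-1}=\tfrac{1}{m-1}\binom{n+m-2}{m-2}$, implicit in the evaluation of $\sum \tfrac{1}{n}\binom{n+m-2}{m-1}q^{n-1}$, is well defined.

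Since Theorem \ref{th2} is already proved and is applicable to every admissible $\beta\in[0,1)$, no additional estimate or calculation is required: the hypothesis $f\in\mathcal{R}^{\tau}(A,B)$ is inherited unchanged, and the conclusion $\mathcal{I}_{q}^{m}f\in\mathcal{TSP}_{p}(\alpha,0)=\mathcal{TSP}_{p}(\alpha)$ follows directly. There is no genuine obstacle here; the only point warranting a brief sentence in the written proof is the identification of the classes when $\beta=0$, everything else being purely symbolic substitution into an inequality already established in Theorem \ref{th2}.
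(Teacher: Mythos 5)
Your proposal is correct and matches the paper exactly: the paper obtains this corollary precisely by setting $\beta=0$ in Theorem \ref{th2}, with no further argument. Your side remark about why $m>1$ is needed (the identity $\tfrac{1}{n}\binom{n+m-2}{m-1}=\tfrac{1}{m-1}\binom{n+m-2}{m-2}$ underlying the evaluation in Theorem \ref{th2}) is also accurate.
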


\begin{corollary}
If $\ f\in \mathcal{R}^{\tau }(A,B),\ $then $\mathcal{I}_{q}^{m}f$ $\in $ $%
\mathcal{UCT}_{p}(\alpha )$ if 
\begin{equation}
(A-B)|\tau |\left( \frac{2q~m}{1-q}+(2-\cos \alpha )\left[ 1-(1-q)^{m}\right]
\right) \leq \cos \alpha .
\end{equation}
\end{corollary}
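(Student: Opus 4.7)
The plan is to deduce this corollary as the immediate $\beta=0$ specialization of Theorem \ref{th4}. Since the introduction fixes the notational convention $\mathcal{UCV}_p(\alpha,0)=\mathcal{UCV}_p(\alpha)$ (and hence $\mathcal{UCT}_p(\alpha,0)=\mathcal{UCT}_p(\alpha)$), substituting $\beta=0$ into the hypothesis
\[
(A-B)|\tau|\left(\frac{2qm}{1-q}+(2-\cos\alpha-\beta)\left[1-(1-q)^{m}\right]\right)\leq \cos\alpha-\beta
\]
of Theorem \ref{th4} produces exactly the stated inequality, and its conclusion is exactly the asserted membership. So the corollary is a formal specialization requiring no new computation.

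For a self-contained verification I would re-run the same argument that proves Theorem \ref{th4}. By Lemma \ref{lem2} with $\beta=0$ (equivalently, the coefficient criterion \eqref{b3}), it suffices to check that
\[
\sum_{n=2}^{\infty}n(2n-\cos\alpha)\binom{n+m-2}{m-1}q^{n-1}(1-q)^{m}|a_{n}|\leq \cos\alpha.
\]
Apply Lemma \ref{lem3}, which gives $|a_n|\leq (A-B)|\tau|/n$; the leading factor $n$ cancels against the $1/n$ from the coefficient bound, reducing the left side to $(A-B)|\tau|$ multiplied by the already-analyzed sum
\[
\sum_{n=2}^{\infty}(2n-\cos\alpha)\binom{n+m-2}{m-1}q^{n-1}(1-q)^{m},
\]
which is precisely the quantity evaluated in the proof of Theorem \ref{thmd2} when $\beta=0$. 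Using the substitution $n=(n-1)+1$ together with the Pascal-series identities recorded at the start of Section 2, this sum collapses to $\frac{2qm}{1-q}+(2-\cos\alpha)\left[1-(1-q)^{m}\right]$.

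Combining these steps, the coefficient tail is bounded by
\[
(A-B)|\tau|\left(\frac{2qm}{1-q}+(2-\cos\alpha)\left[1-(1-q)^{m}\right]\right),
\]
and the hypothesis forces this to be at most $\cos\alpha$, closing the verification. There is no genuine obstacle: the only mildly delicate step is noticing the cancellation $n\cdot(1/n)=1$, which turns the $\mathcal{UCT}_p(\alpha)$ test into the $\mathcal{TSP}_p(\alpha)$-style sum already handled for $\Phi_q^m$, so the work reduces to reusing identities from Section 2.
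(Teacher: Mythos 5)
Your proposal is correct and matches the paper's route exactly: the paper obtains this corollary by setting $\beta=0$ in Theorem \ref{th4}, and your self-contained verification (Lemma \ref{lem2} plus the $n\cdot(1/n)$ cancellation from Lemma \ref{lem3}, reducing to the sum evaluated in Theorem \ref{th1}) is precisely the argument the paper indicates for Theorem \ref{th4} itself.
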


\begin{corollary}
\bigskip The function $\mathcal{G}_{q}^{m}\mathcal{\ }\in $ $\mathcal{UCT}%
_{p}(\alpha )$ if and only if ~the inequality (\ref{hh}) holds.
\end{corollary}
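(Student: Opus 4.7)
The plan is to derive this corollary as an immediate specialization of Theorem \ref{th3}. That theorem establishes the equivalence $\mathcal{G}_q^m \in \mathcal{UCT}_p(\alpha,\beta) \iff (\ref{d2})$ for every admissible $\beta \in [0,1)$. Since the parameter regime of the present corollary simply fixes $\beta = 0$, no new analytic content needs to be produced; the whole task reduces to substitution into an already proved biconditional.

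First, I would recall the definitions $\mathcal{UCT}_p(\alpha) = \mathcal{UCV}_p(\alpha,0) \cap \mathcal{T} = \mathcal{UCT}_p(\alpha,0)$, so the set on the left-hand side of the corollary is literally the $\beta=0$ instance of the class appearing in Theorem \ref{th3}. This justifies invoking that theorem directly. Then I would set $\beta = 0$ in inequality (\ref{d2}), which yields
\begin{equation*}
\frac{2q\,m}{1-q} + (2-\cos\alpha)\bigl[1-(1-q)^m\bigr] \leq \cos\alpha,
\end{equation*}
and this is precisely inequality (\ref{hh}). Therefore, by Theorem \ref{th3} specialized at $\beta=0$, the membership $\mathcal{G}_q^m \in \mathcal{UCT}_p(\alpha)$ is equivalent to (\ref{hh}).

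There is no genuine obstacle here; the only thing to be careful about is bookkeeping, namely to confirm that the specialization $\beta = 0$ is permitted by the standing hypothesis $0 \leq \beta < 1$ (which it is) and that the classes $\mathcal{UCT}_p(\alpha,0)$ and $\mathcal{UCT}_p(\alpha)$ are identified in the same way as $\mathcal{SP}_p(\alpha,0) = \mathcal{SP}_p(\alpha)$ noted after the definitions. Given that, the corollary follows at once, and the proof can be stated in a single sentence citing Theorem \ref{th3}.
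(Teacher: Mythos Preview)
Your proposal is correct and matches the paper's approach exactly: the paper states at the start of Section~5 that all corollaries there are obtained ``by specializing the parameter $\beta=0$ in Theorems \ref{th1}--\ref{th6},'' and this corollary is precisely the $\beta=0$ case of Theorem~\ref{th3}.
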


\begin{corollary}
If $m>1,$ then the function $\mathcal{G}_{q}^{m}\mathcal{\ }\in $ $\mathcal{%
TSP}_{p}(\alpha )$ if and only if ~%
\begin{equation*}
\frac{{}}{{}}2\left[ 1-(1-q)^{m}\frac{{}}{{}}\right] -\frac{\cos \alpha }{%
q(m-1)}\left[ (1-q)-(1-q)^{m}-q(m-1)(1-q)^{m}\right] \leq \cos \alpha .
\end{equation*}
\end{corollary}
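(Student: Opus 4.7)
The plan is to mimic the proof of Theorem \ref{th3}, but invoking Lemma \ref{lem1} (the class $\mathcal{TSP}_p$) in place of Lemma \ref{lem2} (the class $\mathcal{UCT}_p$). Starting from the series expansion
\begin{equation*}
\mathcal{G}_q^m(z) = z - \sum_{n=2}^{\infty}\binom{n+m-2}{m-1} q^{n-1}(1-q)^m \frac{z^n}{n},
\end{equation*}
Lemma \ref{lem1} reduces the problem to showing that the inequality in the statement is equivalent to
\begin{equation*}
\sum_{n=2}^{\infty} \frac{2n-\cos\alpha-\beta}{n}\binom{n+m-2}{m-1} q^{n-1}(1-q)^m \leq \cos\alpha-\beta.
\end{equation*}

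Next I would split the left-hand side linearly as
\begin{equation*}
2\sum_{n=2}^{\infty}\binom{n+m-2}{m-1} q^{n-1}(1-q)^m \;-\; (\cos\alpha+\beta)\sum_{n=2}^{\infty}\frac{1}{n}\binom{n+m-2}{m-1} q^{n-1}(1-q)^m.
\end{equation*}
The first sum is exactly the one evaluated in the proof of Theorem \ref{th1}, and collapses to $1-(1-q)^m$. For the second sum, the key algebraic step (and the reason $m>1$ is required) is the identity
\begin{equation*}
\frac{1}{n}\binom{n+m-2}{m-1} = \frac{1}{m-1}\binom{n+m-2}{m-2},
\end{equation*}
which lets me rewrite the sum in a form where the identity $\sum_{n=0}^{\infty}\binom{n+m-2}{m-2}q^n=(1-q)^{-(m-1)}$ stated at the start of Section~2 can be applied, after peeling off the $n=0$ and $n=1$ terms (which contribute $1$ and $(m-1)q$, respectively).

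Putting these pieces together, the second sum simplifies to
\begin{equation*}
\frac{1}{q(m-1)}\bigl[(1-q)-(1-q)^m-q(m-1)(1-q)^m\bigr],
\end{equation*}
and substituting back yields precisely the left-hand side of the inequality in the statement. The necessity direction follows because Lemma \ref{lem1} provides an \emph{if and only if} characterization, so the same chain of equalities runs in reverse. I do not expect any analytic subtlety here — essentially all the work is the bookkeeping identity displayed above, which is the only nontrivial step and which is the reason the hypothesis $m>1$ appears. The remainder is routine arithmetic analogous to that already carried out in Theorems \ref{th1} and \ref{th2}, so in the spirit of the author's treatment of Theorem \ref{th6} itself, most of it can be compressed or omitted.
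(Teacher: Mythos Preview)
Your proposal is correct and follows essentially the same route as the paper: the corollary is just the $\beta=0$ specialization of Theorem~\ref{th6}, whose (omitted) proof the paper says is parallel to Theorem~\ref{th3}; your computation of the two sums via the identity $\tfrac{1}{n}\binom{n+m-2}{m-1}=\tfrac{1}{m-1}\binom{n+m-2}{m-2}$ is exactly the step already carried out in the proof of Theorem~\ref{th2}. The only cosmetic slip is that you retain $\beta$ in your displays while the corollary has $\beta=0$; either set $\beta=0$ from the start or note explicitly that you are proving Theorem~\ref{th6} and then specializing.
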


\end{document}